\newcommand\blfootnote[1]{%
	\begingroup
	\renewcommand\thefootnote{}\footnote{#1}%
	\addtocounter{footnote}{-1}%
	\endgroup
}
\numberwithin{equation}{section}
\newtheorem{teo}{Theorem}[section]
\newtheorem{theorem}{Theorem}[section]
\newtheorem{proposition}{Proposition}[section]
\newtheorem{corollary}{Corollary}[section]
\newtheorem{conjecture}{Conjecture}[section]
\theoremstyle{definition}
\theoremstyle{remark}
\newtheorem{remark}[teo]{Remark}
\begin{document}
	
	\author{Albert Chau$^*$}
	\address{Department of Mathematics,
		The University of British Columbia, 1984 Mathematics
		Road, Vancouver, B.C., Canada V6T 1Z2} \email{chau@math.ubc.ca}

	\thanks{$^*$Research
		partially supported by NSERC grant no. \#327637-06}

	\author[A. Martens]{Adam Martens}
	\address{Department of Mathematics, The University of British Columbia, 1984 Mathematics Road, Vancouver, B.C.,  Canada V6T 1Z2}  \email{martens@math.ubc.ca}

\blfootnote{MSC class: 53E20}
\blfootnote{Keywords: Ricci flow, noncompact manifolds, unbounded curvature}
\blfootnote{Statements and Declaration: the first author was partially supported by NSERC grant no. \#327637-06.}

	\bibliographystyle{amsplain}

\title{Pseudolocality and completeness for nonnegative Ricci curvature limits of 3D singular Ricci flows}

\begin{abstract} Lai (2021) used singular Ricci flows, introduced by Kleiner and Lott (2017), to construct a nonnegative Ricci curvature Ricci flow $g(t)$ emerging from an arbitrary 3D complete noncompact Riemannian manifold $(M^3, g_0)$ with nonnegative Ricci curvature.  We show $g(t)$ is complete for positive times provided $g_0$ satisfies a volume ratio lower bound that approaches zero at spatial infinity.  Our proof combines a pseudolocality  result of Lai (2021) for singular flows, together with a pseudolocality result of Hochard (2016) and Simon and Topping (2022) for nonsingular flows.  We also show that the construction of complete nonnegative complex sectional curvature flows by Cabezas-Rivas and Wilking (2015) can be adapted here to show $g(t)$ is complete for positive times provided $g_0$ is a compactly supported perturbation of a nonnegative sectional curvature metric.

   \end{abstract}

	\maketitle
	
	\section{Introduction}
	
 In the seminal work \cite{rH1}, Hamilton introduced the Ricci flow  which is the following evolution equation for a family of Riemannian metrics $g(t)$ starting from an initial smooth $n$ dimensional Riemannian manifold $(M^n, g_0)$:

 \begin{equation}\label{RF}
  \left\{
 \begin{aligned}
 	\frac{{\partial g}}{{\partial t}} &= -2 \mathrm{Ric}(g), \\
 	g(0) &= g_0.
 \end{aligned}
 \right.
 \end{equation}
It was proved in \cite{rH1} that when $M^n$ is compact, the Ricci flow \eqref{RF} admits a unique smooth solution $g(t)$ on $M^n\times [0, T)$ for a maximal time $T>0$ bounded below depending only on the dimension $n$ and the initial bounds on the sectional curvatures of $g_0$.  Moreover, when $g_0$ has nonnegative Ricci curvature and $n=3$, Hamilton's collective results in \cite{rH1}, \cite{rH2} and \cite{rH3} imply that $g(t)$ has nonnegative Ricci curvature for all $t>0$ and is either Ricci flat for all times or else converges, after appropriate scaling and pulling back to the universal cover, to the standard metric on either $	\mathbb{S}^3$ or $	\mathbb{S}^2\times 	\mathbb{R}$ as $t\to T$.

\
   It is natural to wonder about the extent to which similar results hold when $M^n$ is noncompact.   This was initiated by Shi in \cite{Shi} who proved the existence of a complete bounded curvature solution $g(t)$ in any dimension assuming $g_0$ is also complete with bounded curvature.  Shi also showed \cite{Shi2} that when $g_0$ has nonnegative Ricci curvature and $n=3$, the solution actually converges, after appropriate scaling and pulling back to the universal cover, towards the standard metric on $	\mathbb{R}^3$ or else $	\mathbb{S}^2 \times 	\mathbb{R}$.  When $g_0$ is complete with possibly unbounded curvature, one cannot expect a solution to \eqref{RF} in general.  In fact, given any $\alpha>0$, it is expected that there exists a complete metric $g_0$ on $	\mathbb{S}^2 \times 	\mathbb{R}$ with $\operatorname{Ric}(g_0)\geq -\alpha$ which exhibits no complete solution to \eqref{RF} (see example 4 in \cite{T1}).  On the other hand, the following conjecture has been considered for a long time and is a special case of a conjecture by Topping for dimensions $n\geq 3$ {\cite[Conjecture 1.1]{T0}}.

    \begin{conjecture}\label{mainquestion}
   Let $(M^3, g_0)$ be a complete noncompact $3$-dimensional Riemannian manifold with nonnegative (possibly unbounded)   Ricci curvature $\operatorname{Ric}(g_0) \geq 0$.  Then \eqref{RF} has a corresponding smooth solution $g(t)$ on $M^3 \times[0, T)$ for some $T>0$, and $g(t)$ is complete and has nonnegative Ricci curvature for each $t\in [0, T)$.    	 
    \end{conjecture}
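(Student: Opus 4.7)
The plan is to take as the candidate $g(t)$ the nonnegative Ricci curvature Ricci flow already produced by Lai via the Kleiner--Lott singular Ricci flow, so that existence, smoothness on $M^3 \times [0, T)$, and preservation of $\operatorname{Ric} \geq 0$ come for free. The only remaining content of Conjecture \ref{mainquestion} is then completeness of $g(t)$ for each $t \in (0, T)$, and this is the crux of the problem.

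To establish completeness I would argue by contradiction. Suppose $t_0 \in (0, T)$ and that $\{p_j\} \subset M^3$ is $g(t_0)$-Cauchy with no limit in $M^3$. Since $(M^3, g_0)$ is itself complete, the $p_j$ must exit every compact set, so $d_{g_0}(p_0, p_j) \to \infty$ for a fixed basepoint $p_0$. The strategy is to apply the singular Ricci flow pseudolocality of Lai at time $t=0$ near each $p_j$, combined with the nonsingular pseudolocality of Hochard and of Simon--Topping along the associated smooth approximating flows, to extract uniform curvature and volume bounds on $g(t_0)$-balls of a fixed radius $r>0$ about the $p_j$. Such bounds would yield a uniform two-sided comparison between $d_{g(t_0)}$ and $d_{g_0}$ on these balls, forcing $\{p_j\}$ to be $g_0$-Cauchy as well and contradicting completeness of $g_0$.

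The main obstacle, and the source of the conjecture's difficulty, is that every presently available pseudolocality theorem demands a quantitative noncollapsing input at $t=0$, typically a positive lower bound on the volume ratio $\operatorname{Vol}_{g_0} B_{g_0}(p_j, 1)$, and $\operatorname{Ric} \geq 0$ alone does not supply this uniformly as $p_j \to \infty$. Indeed, under Bishop--Gromov the volume ratio may decay monotonically to zero at infinity, and this is precisely the regime in which a complete Ricci flow continuation is expected to fail in general (see example 4 in \cite{T1}). The present paper realizes the above strategy exactly under such an added volume ratio hypothesis; removing it appears to require either a genuinely new pseudolocality statement adapted to volume collapse under $\operatorname{Ric} \geq 0$ in dimension three, or a separate structural argument ruling out the obstructive collapsing ends via Cheeger--Colding theory together with careful neck analysis inside the singular Ricci flow.
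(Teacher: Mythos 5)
The statement you have been asked about is stated in the paper as \emph{Conjecture}~\ref{mainquestion}, not as a theorem, and the paper does not claim to prove it. Your proposal correctly diagnoses this: you sketch the natural strategy (take Lai's nonnegative-Ricci solution produced through singular Ricci flows, then try to upgrade its completeness via pseudolocality) and then honestly identify that the strategy stalls precisely because no presently available pseudolocality theorem can be applied without a quantitative noncollapsing input, which $\operatorname{Ric}\geq 0$ alone does not furnish uniformly at spatial infinity. That assessment is exactly the state of the art, and it is why the paper instead proves Theorems~\ref{t1} and~\ref{t2}, which impose the additional hypotheses (a volume ratio lower bound $f(R)$ decaying at infinity, or a compactly supported perturbation of a nonnegative sectional curvature metric on $\mathbb{R}^3$, respectively) needed to make the strategy close.

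Two small comments on the mechanics of your sketch versus what the paper actually does in the partial result Theorem~\ref{t1}. First, the paper argues directly rather than by contradiction: after obtaining curvature decay $|\operatorname{Rm}|_{g(t)}\leq c(v)/t$ on $g(t)$-balls of radius $\sqrt{c(v)}$ around a fixed basepoint $p$ (Corollary~\ref{c1}), it invokes the Shrinking Balls Lemma (Proposition~\ref{SBL}) to show $B_{g(t)}(p,\sqrt{c(v)}-\beta\sqrt{c(v)t})\subset B_{g(0)}(p,\sqrt{c(v)})$, and lets $v\to 0$ so that $c(v)\to\infty$, giving $g(t)$-balls of arbitrarily large radius inside compact $g_0$-balls and hence completeness. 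Second, for this conclusion only a \emph{one-sided} comparison of distances is needed (that $g(t)$-bounded sets are $g_0$-bounded), not the two-sided comparison you mention; the direction controlling $d_{g(t)}$ from below by $d_{g_0}$ is not required and in general would need a separate argument. Neither difference affects your central point: without the added volume ratio hypothesis, the pseudolocality input fails and Conjecture~\ref{mainquestion} remains open, which is consistent with the paper.
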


   	The theory of \eqref{RF} for complete unbounded curvature metrics $g_0$ has seen significant developments since the above mentioned works.  Those developments which support Conjecture \ref{mainquestion} include the following:  Cabezas-Rivas and Wilking \cite{CRW} showed Conjecture \ref{mainquestion} holds when ``nonnegative Ricci curvature" is replaced by ``nonnegative sectional curvature".  Hochard \cite{raH} showed that if $g_0$ is complete with Ricci curvature bounded below (not necessarily by zero), and there exists a uniform positive lower bound on the volume of initial unit balls, then \eqref{RF} has a short-time complete solution $g(t)$.  On the other hand, Chen $et$ $al.$ \cite{CXZ} showed that the hypothesis of nonnegative Ricci curvature is preserved along any 3D complete solution to \eqref{RF}.  Combining these shows that Conjecture \ref{mainquestion} holds under the assumption of a uniform lower bound on volume of initial unit balls. Lai \cite{yL} proved that Conjecture \ref{mainquestion} holds provided we remove the condition of completeness of the solution $g(t)$ for each $t\in (0, T)$.  In other words, there exists a nonnegative Ricci curvature, but possibly instantaneously incomplete, solution $g(t)$ to \eqref{RF} emerging from any complete nonnegative Ricci curvature metric $g_0$.  Lee and Topping \cite{LT} showed that Conjecture \ref{mainquestion} holds provided the pinching condition Ric$(g_0)\geq \epsilon R(g_0)\geq 0$ holds for some $\epsilon>0$ (where $R$ is scalar curvature), and that the solution $g(t)$ has bounded sectional curvatures and likewise pinched Ricci curvature for all $t>0$.  Combining this with earlier results of Deruelle $et$ $al.$ \cite{DSS} and Lott \cite{L}, they were able to conclude that $(M^3, g_0)$ must in fact be either compact or else flat thus proving Hamilton's pinching conjecture.


    Conjecture \ref{mainquestion} can thus be reduced to showing the completeness of the specific solution constructed in \cite{yL} which is the approach we adopt in this article.  The construction in \cite{yL} in turn, is based on studying the singular Ricci flows $\mathcal{N}_k$ emerging from each member of an arbitrary sequence of compact Riemannian manifolds without boundaries $\{(N_k, g_k)\}$ approximating $(M^3, g_0)$.  Singular 3-dimensional Ricci flows, introduced by Kleiner and Lott \cite{KL}, are 4-dimensional Ricci flow space-times emerging from a given compact 3-dimensional manifold $(M^3, g_0)$ and satisfying certain asymptotic conditions.  In many cases,  the ``classical" maximal solution $(M^3\times[0, T_{g_0}), g_0)$ will be strictly contained within the corresponding singular Ricci flow.  The reason for using singular flows is because the classical solutions to \eqref{RF} emerging from $g_k$ may not exist up to a time which is uniform in $k$, in which case one could not use these to obtain a limit solution on $M^3\times[0, T)$ for any $T>0$.  By combining the construction in \cite{yL} with the 3D pseudolocality of Simon and Topping \cite{ST} we prove

	  \begin{theorem}\label{t1}	
	  	There exists a function $f(R):	\mathbb{R}^+\to 	\mathbb{R}^+$ with $\lim_{R\to \infty} f(R) =0$ such that if $(M^3, g_0)$ is a complete 3-dimensional Riemannian manifold with nonnegative Ricci curvature and

	  \begin{equation}\label{t1c1}
	  	\operatorname{Vol}_{g_0}(B_{g_0} (p,R))\geq f(R) R^3
	  \end{equation}
	  	for some $p\in M^3$ and all $R$ sufficiently large, then the Ricci flow \eqref{RF} admits a smooth short-time solution $g(t)$ that starts from $g_0$, is complete and has nonnegative Ricci curvature for every $t > 0$.

	  \end{theorem}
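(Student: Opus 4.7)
The plan is to begin with the smooth, but a priori possibly incomplete, Ricci flow $g(t)$ on $M^3 \times [0, T)$ that Lai's construction in \cite{yL} produces from $g_0$, and to upgrade it to a \emph{complete} solution on a uniform slab $[0, T_0)$. This will be achieved by showing that every $q \in M^3$ sits in a definite $g_0$-neighborhood which persists as a smooth piece of $g(t)$, carrying a curvature bound $|\mathrm{Rm}|(\cdot, t) \le C/t$, up to a time $T_0 > 0$ independent of $q$. Once such uniform local curvature bounds are established, the standard distance-distortion estimate for Ricci flow (cf.\ Perelman's change-of-distance lemma) forces $d_{g(t)}(q, \cdot)$ to remain comparable to $d_{g_0}(q, \cdot)$ on the relevant scales. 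Hence $g(t)$-bounded sets are $g_0$-relatively compact, which is precisely completeness of $g(t)$.

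Fix $q \in M^3$ and set $D := d_{g_0}(p, q)$. Since $\operatorname{Ric}(g_0) \geq 0$, Bishop--Gromov monotonicity at $q$ yields $\operatorname{Vol}_{g_0}(B_{g_0}(q, D))/D^3 \geq \operatorname{Vol}_{g_0}(B_{g_0}(q, 2D))/(2D)^3$, while the triangle inequality gives $B_{g_0}(q, 2D) \supseteq B_{g_0}(p, D)$. Combined with the hypothesis \eqref{t1c1} at scale $R = D$, this implies
\begin{equation*}
\frac{\operatorname{Vol}_{g_0}(B_{g_0}(q, D))}{D^3} \;\geq\; \frac{f(D)}{8}.
\end{equation*}
After rescaling $g_0$ by $D^{-1}$, this becomes a volume lower bound of order $f(D)$ on unit balls, a setting to which the nonsingular 3-dimensional pseudolocality of Hochard \cite{raH} and Simon--Topping \cite{ST} directly applies. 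It furnishes a smooth Ricci flow $h(t)$ on $B_{g_0}(q, D/2)$ for a time of length at least $\tau(f(D))\, D^2$, with curvature estimate $|\mathrm{Rm}_{h(t)}| \le C(f(D))/t$; here $\tau, C$ are the quantitative functions built into that pseudolocality.

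On the other hand, Lai's pseudolocality for singular Ricci flows, applied along the approximating sequence $\{\mathcal{N}_k\}$ used in \cite{yL} to build $g(t)$, shows that the corresponding regions of each $\mathcal{N}_k$ lie in the smooth part with compatible curvature and distance bounds for the same existence time. The two pseudolocality estimates run in parallel, and passing to the limit $k \to \infty$ identifies $h(t)$ with the smooth piece of $g(t)$ on $B_{g_0}(q, D/2) \times [0, \tau(f(D))\, D^2)$. The final step is to select $f$ so that $\tau(f(D))\, D^2 \geq T_0 > 0$ independently of $D$; since $\tau(v) > 0$ for every $v > 0$, this only constrains how fast $f$ is permitted to decay to zero, and yields an admissible $f$ as in the theorem.

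The hard part I anticipate is the identification of $h(t)$ with the smooth piece of Lai's $g(t)$ on $B_{g_0}(q, D/2)$. Standard Ricci flow uniqueness (e.g.\ Chen's theorem for complete flows) requires curvature bounded uniformly down to $t = 0$, whereas the pseudolocality bound $|\mathrm{Rm}| \le C/t$ diverges there, so one cannot simply invoke a uniqueness result. Instead, the identification must be extracted by running the singular pseudolocality of \cite{yL} and the nonsingular pseudolocality of \cite{raH, ST} \emph{in tandem} along the approximating sequence $\mathcal{N}_k$, in such a way that the resulting curvature and distance estimates pass compatibly to the limit. A secondary technicality is to pin down the precise dependence of the Simon--Topping existence time $\tau(v_0)$ on the volume ratio $v_0$, as this controls the admissible decay rate of $f(R)$ in the statement.
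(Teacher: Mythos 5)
Your overall strategy is close to the paper's, and you have correctly identified the crux: the pseudolocality results of Hochard and Simon--Topping are stated for flows sitting inside a complete bounded-curvature Ricci flow, so they do not directly apply to Lai's possibly-incomplete $g(t)$, and some mechanism is needed to get their curvature estimate onto $g(t)$. But the mechanism you propose --- running Hochard/Simon--Topping to produce a separate local flow $h(t)$ and then ``identifying'' it with the corresponding piece of $g(t)$ by passing the two pseudolocalities ``in tandem'' to the limit --- is not actually carried out, and as you yourself concede, this identification is not a matter of invoking standard uniqueness (the curvature bound $|\mathrm{Rm}|\le C/t$ is not integrable down to $t=0$). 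Left as stated, this is a genuine gap: nothing in your argument rules out that the flow $h(t)$ produced by the Hochard/Simon--Topping existence theorem agrees with $g(t)$ only on a shrinking region, or fails to agree at all away from small scales.

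The paper avoids the identification problem entirely by never constructing an auxiliary flow $h(t)$. Instead it upgrades Simon--Topping's Theorem~1.1 itself so that it applies directly to the incomplete local flows $H_l(t)=h_{k_l}(t)$ that Lai extracts from the singular Ricci flows $\mathcal{N}_{k_l}$ and whose pullbacks converge to $g(t)$. Concretely: Lai's Theorem~6.1 is Perelman's \emph{first} pseudolocality (isoperimetric hypothesis) for flows embedded in singular Ricci flows; the paper first converts this, via injectivity radius bounds and harmonic coordinates, into a Perelman \emph{second}-type pseudolocality (curvature and volume hypotheses) for such flows (Theorem~\ref{pseudolocalityyilai}); it then observes that in the proof of Simon--Topping's Theorem~1.1 this second pseudolocality is the only place where completeness and bounded curvature are used, so the theorem holds verbatim for flows embedded in singular Ricci flows (Theorem~\ref{asdasd}). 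With that in hand, the curvature bound $|\mathrm{Rm}|_{H_l(t)}\le c(v)/t$ is obtained on the approximating flows themselves, where it passes trivially to the limit $g(t)$; the Shrinking Balls Lemma then turns these bounds into the distance control you were aiming for, and the explicit choice $f(r)=2c^{-1}(r^2)$ makes the scales match up so that completeness of $g(t)$ follows from completeness of $g_0$ as $v\to 0$. Note also that your plan of choosing $f$ so that the rescaled existence time $\tau(f(D))D^2$ is uniformly bounded below is not quite the right target: the existence time $T$ of $g(t)$ already comes uniformly from Lai's construction; what the choice of $f$ must secure is that the pseudolocality curvature bound is available on $g_0$-balls of radius comparable to $D$, so that the Shrinking Balls Lemma forces $g(t)$-balls of every radius to be relatively compact.
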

Any complete 3-dimensional Riemannian manifold with nonnegative Ricci curvature has at least linear volume growth \cite{Y} in the sense that \eqref{t1c1} holds for  $f(R)=C_p/R^2$ for some $C_p$.  On the other hand, Euclidean volume growth corresponds to when \eqref{t1c1} holds for  $f(R)=C$ for some constant $C$, and in this case Bishop-Gromov volume comparison implies a uniform lower bound on volume of unit balls in which case our results follows from \cite{raH} and \cite{CXZ} as mentioned above.  In general, the function $f$ provides a lower bound on the volume of a unit ball at any $q\in M^3$ as
  $$V(q, 1)\geq \frac{V(q, 2 d(p, q))}{(2 d(p, q))^3}\geq \frac{V(p, d(p, q))}{8(d(p, q))^3}\geq \frac{1}{8}f(d(p, q))$$
  where we have used Bishop-Gromov volume comparison for the first inequality, the triangle inequality for the second, and \eqref{t1c1} for the last, and we have abbreviated $\operatorname{Vol}_{g_0}(B_{g_0} (x,r))$ with  $V(x, r)$.

   The proof of Theorem \ref{t1} is presented in \S3 though we provide the following outline.  Let $(M^3,g_0)$ be complete noncompact with nonnegative Ricci curvature $\operatorname{Ric}(g_0)\geq 0$, and assume that $M^3$ is orientable.
    Then we can find an exhaustion of $M^3$ by compact connected sets $V_k$ with smooth boundaries, corresponding smooth compact Riemannian manifolds without boundaries $(N_k, h_k)$ where $N_k$ is the topological double of $V_k$, and maps $\phi_k:(V_k, g_0)\to (N_k, h_k)$ which are isometries when restricted to $V_{k-1}$.  In particular, we obtain a sequence of compact Ricci flows $\{(N_k, h_k(t)), \; t\in [0,T_k)\}$ with $h_k(0)=h_k$.  It is possible here that $T_k\to 0$ as $k\to \infty$.   However, by analyzing the corresponding singular Ricci flows, Lai  {\cite[Theorem 7.14 and 8.4]{yL}} showed that for each $V_k$, there exists $l_k$ such that $h_{l_k}(t)$ can be extended to $\phi_{l_k}(V_k)\times[0, T)$ for a uniform $T>0$ and that the pullbacks $\{V_k\times[0, T), g_k(t):=\phi_{l_k}^*(h_{l_k}(t))\}$ smoothly locally converges to a nonnegative Ricci curvature (possibly incomplete) Ricci flow $(M^3,g(t))$, $t\in [0,T)$ with $g(0)=g$. Moreover, if $(M^3,g_0)$ is the oriented double cover of a nonorientable $(M',g')$, then $g(t)$ pushes down to a solution $g'(t)$ on $M'\times [0,T)$. The above logic is summarized in Theorem \ref{tyl}.
   
  It was also proved by Lai {\cite[Theorem 6.1]{yL}} that Perelman's original pseudolocality statement {\cite[Theorem 10.1]{P}} still holds for the incomplete flows $(V_k \times[0, T),g_k(t))$ (see Theorem \ref{pseudolocalityyilai0}). In Theorem \ref{pseudolocalityyilai}, we modify this to one analogous to Perelman's second pseudolocality theorem {\cite[Theorem 10.3]{P}}. Armed with this, we are finally able to prove completeness of the solution $g(t)$ constructed above as follows. The idea is to apply a pseudolocality result by Hochard {\cite[Theorem 2.4]{raH}} and Simon and Topping {\cite[Theorem 1.1]{ST} which, a priori applies only to complete bounded curvature flows, and concludes a rough curvature bound $\sup_{B_{g(0)}(x_0,1)}|\operatorname{Rm}|_{g(t)}\leq c(v)/t$ from weak volume control $\operatorname{Vol} \,B_{g(0)}(x_0,1)\geq v$. Upon close examination of the proof however, for example in \cite{ST}, the only place where complete bounded curvature flows are assumed is in the application of Perelman's second pseudolocality theorem, and in view of its modification in Theorem \ref{pseudolocalityyilai}, we can extend this pseudolocality result to the incomplete flows $(V_k\times[0< T),g_k(t))$ (see Theorem \ref{asdasd}).  After appropriate scaling, this allows us to conclude  the following curvature bounds for $g(t)$ from  the condition \eqref{t1c1}:
  \[
  |\operatorname{Rm}|(x,t) \leq \frac{A(x)}{t}
  \]
  where the function $A(x)$ grows at some controlled rate (relative to $g_0$) on $M^3$ in terms of the function $f$. The Shrinking Balls Lemma {\cite[Corollary 3.3]{ST}} is then used to control distances relative to $g(t)$ in terms of distances relative to $g_0$ and the function $A(x)$, which by our choice of $f$ and hence $A(x)$, implies the completeness of $g(t)$.\\

Producing noncompact Ricci flows using compact approximations obtained through doubling an exhaustion was first done in \cite{CRW} in the case $(M^n, g_0)$ is complete with nonnegative complex sectional curvature.  Using splitting theorems on the universal cover, their proof reduced to the case when the soul is a single point and hence $M^n=	\mathbb{R}^n$.  In this case, using an exhaustion via the convex sublevel sets of the Busemann function, they were able to costruct the approximating compact manifolds without boundary $\{(N_k, h_k)\}$ to have positive complex sectional curvature and showed the corresponding Ricci flows $h_k(t)$ exist up to a uniform time $T>0$ and converge locally uniformly to a smooth, complete, nonnegative complex sectional curvature solution $g(t)$ on $M^n\times[0, T)$.  

 We observe that the above construction in \cite{CRW} can be combined with the above construction from \cite{yL} to give the following

   \begin{theorem}\label{t2}

   	Let $(M^3, g_0)$ be a 3-dimensional Riemannian manifold with nonnegative Ricci curvature, where $g_0$ is a compactly supported perturbation of a complete nonnegative sectional curvature metric.  Then the Ricci flow \eqref{RF} admits a smooth short-time solution $g(t)$ that starts from $g_0$, and is complete and has nonnegative Ricci curvature for every $t > 0$.
   	
   \end{theorem}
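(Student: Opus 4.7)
The plan is to combine the Cabezas-Rivas and Wilking doubling-by-exhaustion construction with Lai's singular Ricci flow machinery used in the proof of Theorem \ref{t1}. Let $\hat g_0$ be the complete nonnegative sectional curvature metric on $\mathbb{R}^3$ of which $g_0$ is a compactly supported perturbation, and fix a compact set $K \subset \mathbb{R}^3$ containing the support of $g_0 - \hat g_0$. A Busemann function of $(\mathbb{R}^3, \hat g_0)$ has sublevel sets $V_k$ that are $\hat g_0$-convex and exhaust $\mathbb{R}^3$, and for a generic sequence of sufficiently large $k$ we may assume $\partial V_k$ is smooth and $V_k \supset K$. Form the topological double $N_k := V_k \cup_{\partial V_k} V_k$. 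Since the smoothing of the doubled metric across $\partial V_k$ carried out in \cite{CRW} is supported in a small collar lying inside the region where $g_0 = \hat g_0$, applying that construction to $\hat g_0$ on the outer portion while leaving $g_0$ unchanged on the portion containing $K$ should produce a smooth metric $h_k$ on $N_k$ which (i) coincides with $g_0$ on a copy of $V_k$ containing $K$, (ii) has positive sectional curvature outside a slightly enlarged compact set $K' \supset K$, and (iii) has nonnegative Ricci curvature globally on $N_k$.

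Next, I feed the compact initial data $(N_k, h_k)$ into Lai's singular Ricci flow construction exactly as in the proof of Theorem \ref{t1}: by Theorem \ref{tyl} there is a uniform $T > 0$ and, after pulling back via $V_k \hookrightarrow N_k$ and passing to a subsequence, a smooth nonnegative Ricci curvature (a priori incomplete) limit Ricci flow $g(t)$ on $\mathbb{R}^3 \times [0,T)$ with $g(0) = g_0$.

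It remains to promote $g(t)$ to a complete flow. The key advantage over the setting of Theorem \ref{t1} is that outside $K'$ the metric $h_k$ has nonnegative sectional curvature and, for $k$ large, is independent of $k$ on arbitrarily large fixed regions, so the curvature estimates of \cite{CRW} give a uniform bound $|\operatorname{Rm}|_{h_k(t)} \leq C/t$ outside $K'$ for $t \in (0,T)$. Passing this bound to the limit $g(t)$ and combining it with Lai's singular-flow pseudolocality (Theorem \ref{pseudolocalityyilai}) and the pseudolocality of Theorem \ref{asdasd} applied in a collar around $\partial K'$, I obtain $|\operatorname{Rm}|_{g(t)} \leq C/t$ on $\mathbb{R}^3 \setminus K''$ for some compact $K''$. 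The Shrinking Balls Lemma then compares distances under $g(t)$ to distances under $g_0$ outside $K''$, and completeness of $g(t)$ follows from completeness of $g_0$.

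The main obstacle is the first step: verifying that the Cabezas-Rivas and Wilking smoothing across $\partial V_k$ fits together with $g_0$ on the interior to give a smooth metric on $N_k$ whose Ricci curvature is nonnegative everywhere, and that the estimate $|\operatorname{Rm}|_{h_k(t)} \leq C/t$ persists when $h_k(t)$ is taken to be the classical part of Lai's singular Ricci flow rather than a classical Ricci flow as in \cite{CRW}. Since the smoothing takes place entirely in the region $\{g_0 = \hat g_0\}$ and preserves nonnegative sectional curvature there, and $g_0$ itself has nonnegative Ricci, the gluing in (iii) should go through; the curvature estimate on the singular flow should then follow by combining the argument of \cite{CRW} with Theorem \ref{pseudolocalityyilai}.
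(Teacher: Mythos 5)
Your overall skeleton (double the Busemann sublevel sets, feed the compact approximations into Lai's Theorem \ref{tyl}, then argue completeness using properties of the Cabezas-Rivas--Wilking construction) matches the paper's, but the two steps you yourself flag as "the main obstacle" contain genuine gaps that the paper's proof is specifically designed to avoid.

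The construction of the compact approximations $(N_k,h_k)$ does not go through as you describe. The Cabezas-Rivas--Wilking metrics $h_{k,l}$ on $N_k$ are not obtained by a local collar smoothing near $\partial V_k$; they arise from a more global regularization of the Alexandrov double, and they only \emph{converge} to the restriction of $\hat g_0$ as $l\to\infty$ rather than agree with it on a fixed compact set. Consequently, one cannot simply "leave $g_0$ unchanged on the portion containing $K$" and glue in the CRW smoothing on the complement: one would have to patch two metrics that do not agree along a matching collar, and there is no reason the result would be smooth, let alone have nonnegative Ricci curvature. The paper sidesteps this by a different mechanism: set $\tilde h_{k,l} := h_{k,l} + (\phi_k^{-1})^*\tilde g$, which has $\mathrm{Ric}\ge -c_l$ with $c_l\to 0$, and then invoke the almost-nonnegative-curvature result of \cite{BCRW} to run the Ricci flow for a short time $t_k$ and obtain metrics $H_k := \tilde h_{k,l_k}(t_k)$ on $N_k$ with genuinely nonnegative Ricci curvature, still converging to $g_0$ as $k\to\infty$ thanks to Chen's strong uniqueness theorem \cite{blC}. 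That is the step your proposal is missing, and it is essential: the perturbation $\tilde g$ is only Ricci-nonnegative, not sectional-nonnegative, so the soul/convexity machinery cannot be used to directly produce a smooth compact double with $\mathrm{Ric}\ge 0$.

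The completeness step also has a gap. You claim a uniform bound $|\operatorname{Rm}|_{h_k(t)}\le C/t$ outside a compact set on the grounds that the metric has nonnegative sectional curvature there initially. But curvature conditions do not localize under Ricci flow: nonnegative sectional curvature on $N_k\setminus \phi_k(K')$ at $t=0$ does not persist for $t>0$ because the negatively curved region inside $K'$ can influence the exterior. The paper instead observes that the completeness argument in \cite{CRW} is essentially abstract: for the Busemann-sublevel exhaustion $V_k$, \emph{any} sequence of compact Riemannian manifolds $(N_k,H_k)$ with isometric involutions $\psi_k$ satisfying (a) nonnegative Ricci curvature, (b) $\psi_k$-symmetry, (c) $\phi_k^*(H_k)\to g_0$ locally smoothly, has $B_{H_k(t)}(p,R)\subset \phi_k(S)$ for a fixed compact $S$, uniformly in $k$. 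Applying this directly yields completeness of the limit without needing any local curvature bound outside a compact set. In short, the paper's argument buys a rigorous regularization of the initial data (via \cite{BCRW} plus Chen's uniqueness) and an abstract, uniform distance control (from \cite{CRW}), whereas your version needs a local gluing and a localized curvature preservation, neither of which holds.
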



\section*{Acknowledgement}The authors would like to thank Peter Topping for helpful comments.

\section{PRELIMINARIES}
 We begin with the following main existence result from \cite{yL}.

\begin{theorem}\label{tyl} [Convergence of 3D singular Ricci flows \cite{yL}] Let $(M^3, g_0)$ be a complete 3-dimensional Riemannian manifold with nonnegative Ricci curvature, $\{V_k\}$ an exhaustion of $M^3$ by relatively compact connected open sets, and $(N_k , h_k)$ a sequence of compact Riemannian manifolds without boundaries with diffeomorphisms onto their images $\phi_k: V_k \to N_k$ satisfying 	
\[
\phi_k^*(h_k)\xrightarrow[C^{\infty}_{loc}(M^3)]{}  g_0 .
\]
	
	  Then for each $V_l$ there exists an incomplete solution $h_{k_l}(t)$ to \eqref{RF} on $\phi_{k_l}(V_{l})\times[0, T)$ for some $k_l\geq l$ with $h_{k_l}(0)=h_{k_l}$ and $T$ independent of $l$ such that
	
	\begin{enumerate}
		\item Each $\{\phi_{k_l}(V_l)\times[0, T), h_{k_l}(t)]\}$ is embedded within a singular Ricci flow $\mathcal{N}_{k_l}$ emerging from $(N_{k_l}, h_{k_l})$.
		\item We have the convergence
		\[
		\phi_{k_l}^*(h_{k_l} (t))\xrightarrow[C^{\infty}_{loc}(M^3\times[0, T))]{}  g(t) 		
		\]
		 where $g(t)$ is a nonnegative Ricci curvature (but possibly incomplete for all $t>0$) solution to \eqref{RF} on  $M^3\times[0,T)$ with $g(0)=g_0$.
		
		\item If $(M^3, g_0)$ is the Riemannian double cover of $(M', g'_0)$, then $g(t)$ pushes down to a solution $g'(t)$ to \eqref{RF} on $M'\times[0, T)$.
	\end{enumerate} 
	\end{theorem}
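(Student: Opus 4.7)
The plan is to realize $g(t)$ as a $C^\infty_{loc}$ subsequential limit of the classical smooth parts of singular Ricci flows emerging from the compact approximations $(N_k,h_k)$, following the strategy of \cite{yL}. First, for each $k$ I would invoke the Kleiner--Lott existence theorem \cite{KL} to produce a singular Ricci flow $\mathcal{N}_k$ with initial time-slice $(N_k,h_k)$. By uniqueness of the classical flow, the smooth solution $h_k(t)$ with $h_k(0)=h_k$ is embedded in $\mathcal{N}_k$ as the regular part on its classical maximal interval, so assertion (1) will follow once the region of $\mathcal{N}_{k_l}$ avoiding singularities up to the uniform time $T$ has been identified.

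The crux is (2): producing indices $k_l\geq l$ and a uniform $T>0$ such that $\phi_{k_l}(V_l)\times[0,T)$ lies in the regular part of $\mathcal{N}_{k_l}$ with uniform curvature control, and extracting a smooth limit. For this I would apply Lai's pseudolocality for singular Ricci flows, \cite[Theorem 6.1]{yL}, pointwise over $\phi_{k_l}(V_l)$. Its almost-Euclidean hypothesis is verified via Bishop--Gromov: since $\operatorname{Ric}(g_0)\geq 0$, the ratio $\operatorname{Vol}_{g_0}B_{g_0}(q,r)/r^3$ is non-increasing in $r$ and tends to the Euclidean value as $r\to 0$, so at each $q\in M^3$ there is a small scale on which $g_0$ looks nearly flat. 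A diagonal choice of $k_l\geq l$ using $\phi_k^*h_k\to g_0$ in $C^\infty_{loc}$ transfers this almost-Euclidean property to $h_{k_l}$ on $\phi_{k_l}(V_l)$ uniformly in $l$. The pseudolocality output is a curvature estimate of type $|\operatorname{Rm}|(h_{k_l}(t))\leq C/t$ in a fixed-scale neighborhood of each point of $\phi_{k_l}(V_l)$ for $t\in(0,T)$, which prevents any singularity of $\mathcal{N}_{k_l}$ from reaching $\phi_{k_l}(V_l)$ before $T$.

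Combining these bounds with Shi's interior derivative estimates and Hamilton's compactness theorem yields the smooth subsequential limit $g(t)$ on $M^3\times[0,T)$ with $g(0)=g_0$. Nonnegativity $\operatorname{Ric}(g(t))\geq 0$ is argued pointwise from the $C^\infty$-closeness of $h_{k_l}$ to $g_0\geq 0$ on $\phi_{k_l}(V_l)$, combined with Hamilton--Ivey type pinching in 3D and a localized maximum principle on the pseudolocality neighborhood, forcing the negative part of $\operatorname{Ric}(h_{k_l}(t))$ there to vanish in the limit $l\to\infty$. For assertion (3), when $(M^3,g_0)$ is the orientable Riemannian double cover of $(M',g'_0)$ with deck transformation $\sigma$, one can arrange the $(N_k,h_k)$ to be $\sigma$-equivariant; uniqueness of singular Ricci flows then makes $\mathcal{N}_{k_l}$, and hence the limit $g(t)$, $\sigma$-invariant, so $g(t)$ descends to $g'(t)$ on $M'\times[0,T)$.

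The principal obstacle I anticipate is the uniformity of $T$ in $l$. Perelman's original pseudolocality delivers an existence time proportional to the square of the scale at which the data is almost Euclidean, and on a manifold with $\operatorname{Ric}\geq 0$ but no uniform injectivity radius lower bound this scale may shrink toward spatial infinity, precluding a uniform existence window. Overcoming this is precisely the content of Lai's pseudolocality for singular flows, which combines Perelman's canonical neighborhood structure with the Bishop--Gromov volume ratio monotonicity to convert the point-dependent small-scale almost-Euclidean condition into a uniform time $T$.
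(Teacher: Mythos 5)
The paper does not prove Theorem \ref{tyl}; it is stated as a summary of Lai's work and attributed entirely to \cite[Theorems 6.1, 7.14, 8.4]{yL}, with the text ``We begin with the following main existence result from \cite{yL}.'' So there is no internal proof to compare against; your proposal is really an attempt to sketch Lai's argument, and at a high level (Kleiner--Lott singular flows as the ambient object, pseudolocality on the almost-flat region $\phi_{k_l}(V_l)$ to keep singularities away, Hamilton compactness for the limit, equivariance to descend to a nonorientable quotient) it lines up with what Lai actually does.

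There are, however, two concrete soft spots in your sketch. First, you invoke ``Hamilton--Ivey type pinching in 3D and a localized maximum principle'' to get $\operatorname{Ric}(g(t))\geq 0$. Hamilton--Ivey controls the negative part of the \emph{sectional} curvature relative to scalar curvature and does not yield preservation of a Ricci lower bound; it is the wrong tool here. What is actually needed is a quantitative local statement that a Ricci lower bound $\operatorname{Ric}\geq -\varepsilon_l$ on the initial data propagates, under a $|\operatorname{Rm}|\leq c/t$ bound, to $\operatorname{Ric}\geq -\tilde\varepsilon_l(t)$ with $\tilde\varepsilon_l\to 0$ as $\varepsilon_l\to 0$, so that the limit is Ricci-nonnegative --- exactly the type of estimate the present paper later cites as \cite[Lemma 2.2]{ST} (and which parallels the local pinching of \cite{CXZ}). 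Second, you flag uniformity of $T$ in $l$ as the main obstacle and assert that ``Lai's pseudolocality for singular flows'' resolves it outright. That overstates what \cite[Theorem 6.1]{yL} (reproduced as Theorem \ref{pseudolocalityyilai0}) delivers: it is a pointwise curvature bound on a time interval proportional to the square of the local almost-Euclidean scale, which a priori degenerates as $V_l$ exhausts $M^3$. The passage from this local statement to a single $T>0$ valid on every $\phi_{k_l}(V_l)$ is the genuine content of Lai's Theorems 7.14 and 8.4 (scalar curvature growth control on the regular part of the singular flow and the ensuing compactness/extraction), and your sketch leaves that step as a black box rather than supplying the mechanism.
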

We refer to \cite{yL} and \cite{KL} for the definition and properties of 3D singular Ricci flows.

Pseudolocality results for 3D Ricci flows were established by Hochard {\cite[Theorem 2.4]{raH}} and extended by Simon and Topping {\cite[Theorem 1.1]{ST}}.  The key feature in these results were that they applied to Ricci flows starting from arbitrary initial domains, whereas previous results required a sufficiently Euclidean initial domain.  This feature will be crucial for our application and proof of Theorem \ref{t1}.  The results from \cite{raH} and \cite{ST} were stated for local Ricci flows contained in some complete bounded curvature Ricci flow, and so do not directly apply to our setting.  However, what was actually proved in \cite{ST} for example, was that the results hold for any Ricci flow $(N\times[0, T),  g(t))$ for which {\cite[Theorem 6.2]{ST}} can be assumed (after removing the complete bounded curvature assumption there).  This assumption is possible in our setting due to Lai's extension of Perelman's pseudolocalicty 
to 3D singular Ricci flows \cite[Theorem 6.1]{yL}.  We may thus conclude

\begin{theorem}\label{asdasd}[Extension of {\cite[Theorem 1.1]{ST}} to 3D singular Ricci flows]
	
	Let $\{N \times [0, T), g(t)\}$ be a smooth (possibly incomplete) solution to \eqref{RF} embedded within some 3D singular Ricci flow $\mathcal{M}$, and let $p\in N$.  Suppose  that
	$ B_{g(0)}(p, 1+\sigma) \subset \subset N$ for some $\sigma>0$, that
	\begin{equation}
		\operatorname{Vol} B_{g(0)}(p, 1) \geq v_0 > 0 
	\end{equation}
	and
		\begin{equation}
		\operatorname{Ric}{(g(0))} \geq -K < 0 \text{ on } B_{g(0)}(p, 1 + \sigma).
		\end{equation}
 Then there exist $\widetilde{T} = \widetilde{T}(v_0, K, \sigma) > 0$, $\tilde{v}_0 = \tilde{v}_0(v_0, K, \sigma) > 0$, $\widetilde{K} = \widetilde{K}(v_0, K, \sigma) > 0$ and $c_0 = c_0(v_0, K, \sigma) < \infty$ such that for all $t \in [0, T) \cap (0, \widetilde{T})$ we have
 $ B_{g(t)}(p, 1) \subset \subset N$, and on $B_{g(t)}(p, 1)$ we have
 
	\begin{enumerate}
		\item $\operatorname{Vol} B_{g(t)}(p, 1) \geq \tilde{v}_0 > 0$,
		\item $\operatorname{Ric}(g(t)) \geq -\tilde{K}$,
		\item $|\operatorname{Rm}|_{g(t)} \leq \frac{c_0}{t}$.
	\end{enumerate}
\end{theorem}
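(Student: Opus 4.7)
The plan is to follow the proof of \cite[Theorem 1.1]{ST} essentially verbatim and substitute Theorem \ref{pseudolocalityyilai} for its only nonlocal ingredient. The Simon-Topping argument proceeds by combining a doubling/contradiction scheme with local maximum principle and cutoff estimates, the Shrinking Balls Lemma \cite[Corollary 3.3]{ST}, and an application of Perelman's second pseudolocality \cite[Theorem 6.2]{ST}. The only place where completeness and global curvature bounds on the ambient flow are used is in the invocation of \cite[Theorem 6.2]{ST}; every other step is purely local on a neighbourhood of $B_{g(0)}(p, 1+\sigma)$ and is valid for any smooth Ricci flow defined there.

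The first step is to observe that since $\{N\times[0,T), g(t)\}$ is smoothly embedded in the singular Ricci flow $\mathcal{M}$ and $B_{g(0)}(p, 1+\sigma)\subset\subset N$, we may treat the restriction of $g(t)$ to a neighbourhood of $\overline{B_{g(0)}(p, 1+\sigma)}$ as an ordinary smooth Ricci flow and run the local estimates of \cite{ST} on this region. Under the volume and Ricci hypotheses, those local estimates produce improved volume and lower Ricci bounds on a family of shrinking balls $B_{g(t)}(p, r(t))$, provided one has a rough curvature decay of the form $|\operatorname{Rm}|_{g(t)}\le c_0/t$ on a slightly larger region. This last bound is precisely what classically requires Perelman's second pseudolocality.

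We replace the appeal to \cite[Theorem 6.2]{ST} with Theorem \ref{pseudolocalityyilai}, which, as noted in the preceding paragraph of the paper, is the adaptation of \cite[Theorem 6.1]{yL} to a form parallel to Perelman's second pseudolocality and is valid for any smooth flow embedded in a 3D singular Ricci flow. The hypotheses required by Simon-Topping at this point are exactly of the form supplied by Theorem \ref{pseudolocalityyilai}, so the substitution is direct. The Shrinking Balls Lemma then controls how $B_{g(t)}(p,1)$ moves inside $B_{g(0)}(p,1+\sigma)$ for small $t$, and choosing $\widetilde{T}=\widetilde{T}(v_0,K,\sigma)$ small enough depending on $c_0$ and $\sigma$ ensures $B_{g(t)}(p,1)\subset\subset N$ throughout $[0,T)\cap(0,\widetilde{T})$.

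The main obstacle is bookkeeping: one must verify carefully that every use of a global hypothesis in \cite{ST} genuinely factors through \cite[Theorem 6.2]{ST}, and that all constants output by Theorem \ref{pseudolocalityyilai} can be absorbed into $\widetilde{T}, \tilde{v}_0, \widetilde{K}, c_0$ with the dependencies $(v_0, K, \sigma)$ claimed in the statement. Once this check is completed, Theorem \ref{asdasd} follows with the same structure and dependencies as \cite[Theorem 1.1]{ST}, with the only substantive change being the replacement of the ambient-completeness hypothesis by the hypothesis that the given flow sits inside a 3D singular Ricci flow $\mathcal{M}$.
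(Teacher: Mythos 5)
Your proposal takes essentially the same approach as the paper: the final step of the paper's proof is precisely the observation that the only appeal to ambient completeness and bounded curvature in the proof of \cite[Theorem 1.1]{ST} is the invocation of \cite[Theorem 6.2]{ST}, and once that is replaced by Theorem \ref{pseudolocalityyilai} the Simon--Topping argument runs unchanged. That is stated in the paper essentially verbatim, as a one-sentence conclusion.

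The one thing worth flagging is that the bulk of the paper's proof of Theorem \ref{asdasd} is actually occupied with \emph{proving} Theorem \ref{pseudolocalityyilai}, which you treat as an available black box. In the paper, Theorem \ref{pseudolocalityyilai} is stated and proved inside the proof of Theorem \ref{asdasd}: it is derived from Lai's first pseudolocality extension (Theorem \ref{pseudolocalityyilai0}) by the chain: Cheeger--Gromov--Taylor injectivity-radius lower bounds from the volume and Ricci hypotheses, harmonic coordinates around $p$ to verify the almost-Euclidean isoperimetric hypothesis of Theorem \ref{pseudolocalityyilai0} at a scale $r$ depending on $v_0$ and $\delta'$, and finally \cite[Theorem 3.1]{blC} to upgrade the resulting $t^{-1}+(\epsilon' r)^{-2}$ curvature bound to the uniform bound $C(\epsilon' r)^{-2}$ on a slightly smaller ball and time interval. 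Taking Theorem \ref{pseudolocalityyilai} as given is a defensible parsing of the paper's structure (it is announced in the introduction), but a self-contained proof of Theorem \ref{asdasd} would need to supply this derivation as well, since it is where the genuine technical work in passing from Lai's first-type pseudolocality to the second-type statement needed by \cite{ST} actually takes place.
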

 \begin{remark} The Theorem coincides with {\cite[Theorem 1.1]{ST}} when $(N\times[0, T), g(t))$ is contained in some complete bounded curvature Ricci flow.  In this case, conclusion (3) was established independently in {\cite[Theorem 2.4]{raH}}.  We will actually only need conclusion (3) for our later purposes.
 \end{remark}

\begin{proof}[Proof of Theorem \ref{asdasd}]

	We begin with the following statement of \cite[Theorem 6.1]{yL} which extends Perelman's first pseudolocality Theorem to 3D singular Ricci flows.

	\begin{theorem}\label{pseudolocalityyilai0}[Extension of \cite[Theorem 10.1]{P} to 3D singular Ricci flows]
		
		For every $\alpha > 0$, there exists $\delta, \epsilon > 0$ with the following property.
		
		Let $\{N \times [0, T), g(t)\}$ be a smooth (possibly incomplete) solution to \eqref{RF} embedded within some 3D singular Ricci flow $\mathcal{M}$ and let $p\in N$.  Suppose $B_{g(0)}(p, r_0) \subset \subset N$ and
		\begin{enumerate}
			\item $R(g(0)) \geq -r_0^{-2}$ on $B_{g(0)}(p, r_0)$,\,\,  ($R(g(0))$ denotes scalar curvature of $g(0)$),
			\item  $\operatorname{Vol}(\partial\Omega)^3 \geq (1 - \delta)c_3\operatorname{Vol}(\Omega)^2$
			for all $\Omega \subset B_{g(0)}(p, r_0)$ where $c_3$ is the Euclidean isoperimetric constant at dimension 3.
		\end{enumerate}
		Then $B_{g(t)}(p, \epsilon r_0)\subset \subset N$  and $$|\operatorname{Rm}|(x, t) < \alpha t^{-1} + (\epsilon r_0)^{-2}$$ holds on $B_{g(t)}(p, \epsilon r_0)$ for all $t\in [0, \text{min}(T, (\epsilon r_0)^2)]$.
	\end{theorem}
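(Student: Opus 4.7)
The plan is to adapt Perelman's original proof of the first pseudolocality theorem \cite[Theorem 10.1]{P} to the singular Ricci flow setting, using the structural theory of 3D singular Ricci flows developed in \cite{KL}. The guiding heuristic is that in a 3D singular Ricci flow, high-curvature points enjoy canonical neighborhoods and the curvature scale near any singular stratum diverges at a controlled rate, so blow-up limits centered at high-curvature smooth points should again be complete smooth ancient $\kappa$-solutions, and Perelman's argument should transfer with only localized modifications.

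I would argue by contradiction: fix $\alpha>0$ and assume that for every $\delta,\epsilon>0$ one can produce a singular Ricci flow $\mathcal{M}$ with smooth subregion $N\times[0,T)$, a base point $p\in N$, a scale $r_0$, and a time $t\in(0,(\epsilon r_0)^2]$ at which either $B_{g(t)}(p,\epsilon r_0)\not\subset\subset N$, or some $x$ in this ball satisfies $|\operatorname{Rm}|(x,t)\geq \alpha t^{-1}+(\epsilon r_0)^{-2}$, while the initial scalar-curvature lower bound and the $(1-\delta)$-Euclidean isoperimetric profile both hold on $B_{g(0)}(p,r_0)$. Taking a sequence with $\delta_j,\epsilon_j\to 0$ and normalizing $r_{0,j}=1$, I would apply Perelman's point-selection procedure within the smooth locus $N_j\times[0,T_j)$ to obtain points $(y_j,s_j)$ with $s_j\to 0$, $Q_j:=|\operatorname{Rm}|(y_j,s_j)\to\infty$, and curvature bounded by a uniform multiple of $Q_j$ on a parabolic neighborhood. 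Parabolically rescaling by $Q_j$ at $(y_j,s_j)$, one extracts a subsequential smooth limit that is a complete ancient $\kappa$-solution in dimension three; the contradiction then follows as in Perelman by combining the nearly-Euclidean isoperimetric profile at $t=0$ with monotonicity of the reduced volume along $\mathcal{L}$-geodesics back to the initial time, which forces the asymptotic soliton of the limit to be flat Euclidean space, in conflict with the classification of nonflat $\kappa$-solutions.

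The central obstacle, and where the proof genuinely departs from the smooth case, is justifying that the blow-up limit is both smooth and complete. One must ensure that every fixed rescaled parabolic ball around $(y_j,s_j)$ lies well inside the smooth locus of $\mathcal{M}_j$ for large $j$, and that the $\mathcal{L}$-geodesics used in the reduced-volume argument do not exit the smooth part. This is exactly where the singular Ricci flow framework of \cite{KL} is essential: the canonical neighborhood theorem implies that near any singular point the scalar curvature blows up at a rate commensurate with the inverse distance to the singular stratum, so any uniform bound on $|\operatorname{Rm}|$ in the rescaled picture pushes singular points to rescaled distance tending to infinity from $(y_j,s_j)$. Combined with $\kappa$-noncollapsing (which propagates into the singular setting via reduced-volume monotonicity along curves contained in the smooth part), this yields the required complete smooth ancient limit, after which Perelman's contradiction argument applies essentially verbatim, delivering both the compact containment $B_{g(t)}(p,\epsilon r_0)\subset\subset N$ and the pointwise curvature bound on this ball.
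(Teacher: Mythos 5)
This theorem is not proved in the paper at all: it is stated verbatim as a quotation of Lai's result \cite[Theorem 6.1]{yL}, and the paper's ``proof'' is simply that citation. So the first thing to say is that you are attempting to reprove, from scratch, a result that the paper (quite reasonably) treats as a black box; the expected argument here is one line long. If you do want to supply an actual proof, what you have written is an outline whose hard steps are exactly the content of Lai's theorem and are asserted rather than established: that the point-picking, the blow-up, and the backward-in-time comparison to $t=0$ can all be confined to the smooth, possibly incomplete region $N\times[0,T)$ of the singular flow, and that one can conclude $B_{g(t)}(p,\epsilon r_0)\subset\subset N$ at positive times. Saying ``the canonical neighborhood theorem pushes singular points to infinite rescaled distance'' is the right intuition but is not a proof; making it precise (in particular, controlling the curvature along the relevant space-time curves all the way back to $t=0$, where the curvature need not be large and the canonical neighborhood assumption gives nothing) is where the work lies.

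There is also a concrete misidentification of the mechanism of Perelman's Theorem 10.1. That proof does not run through reduced-volume monotonicity, asymptotic solitons, and the classification of nonflat $\kappa$-solutions (those belong to the no-local-collapsing and canonical-neighborhood arguments). Rather, after point-picking one considers the conjugate heat kernel based at the selected point $(\bar x,\bar t)$, shows via a blow-up argument that the associated pointed entropy integral $\int v\,dV$ is bounded above by a negative constant $-\beta$ shortly before $\bar t$ (the blow-up limit being a gradient shrinking soliton, hence flat, contradicting the normalization $|\operatorname{Rm}|=1$ at the center), propagates this to $t=0$ by monotonicity, and then contradicts the logarithmic Sobolev inequality that the almost-Euclidean isoperimetric hypothesis (2) implies. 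Your sketch never uses hypothesis (2) in the way it must be used --- through the log-Sobolev inequality --- and the route you describe (forcing a flat asymptotic soliton of an ancient $\kappa$-solution from the isoperimetric profile at $t=0$) does not assemble into a contradiction as stated. The clean fix is to replace the whole attempt with the citation of \cite[Theorem 6.1]{yL}, which is what the paper does.
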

	
	We use this now to extend {\cite[Theorem 6.2]{ST}} (a modification of Perelman's second pseudolocality Theorem \cite[Theorem 10.3]{P}) to 3D singular Ricci flows.

	\begin{theorem}\label{pseudolocalityyilai}[Extension of {\cite[Theorem 6.2]{ST}} to 3D singular Ricci flows]

		Given $v_0>0$, there exists $\epsilon>0$ with the following property:
		Let $\{N \times [0, T), g(t)\}$ be a smooth (possibly incomplete) solution to \eqref{RF} embedded within some 3D singular Ricci flow $\mathcal{M}$ and let $p\in N$.  Suppose $B_{g(0)}(p, r_0) \subset \subset N$ and
		\begin{enumerate}
			\item $|\operatorname{Rm}|_{g(0)} \leq r_0^{-2}$ on $B_{g(0)}(p, r_0)$,
			\item  $\operatorname{Vol} B_{g(0)}(p, r_0) \geq v_0 r_0^3$.
		\end{enumerate}
		Then $B_{g(t)}(p, \epsilon r_0)\subset \subset N$  and $$|\operatorname{Rm}|(x, t) < (\epsilon r_0)^{-2}$$ holds on $B_{g(t)}(p, \epsilon r_0)$ for all $t\in [0, \text{min}(\epsilon r_0, T))$.
	\end{theorem}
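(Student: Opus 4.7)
The strategy is to deduce Theorem \ref{pseudolocalityyilai} from Theorem \ref{pseudolocalityyilai0}, mirroring Simon-Topping's deduction of their Theorem 6.2 from Perelman's first pseudolocality in the complete bounded curvature setting. The reduction requires converting the bounded-curvature plus volume-lower-bound hypotheses into the almost-Euclidean isoperimetric condition demanded by Theorem \ref{pseudolocalityyilai0} on a sufficiently small scale around every point near $p$. By parabolic rescaling we may assume $r_0 = 1$ throughout.

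Fix $\alpha > 0$ to be chosen later and let $\delta(\alpha), \epsilon_1(\alpha)$ be the constants from Theorem \ref{pseudolocalityyilai0}. The crucial claim is the existence of $s_0 = s_0(v_0, \delta) > 0$ such that every $q \in B_{g(0)}(p, 1/2)$ satisfies $\operatorname{Vol}(\partial \Omega)^3 \geq (1-\delta) c_3 \operatorname{Vol}(\Omega)^2$ for all $\Omega \subset B_{g(0)}(q, s_0)$. I would prove this by propagating the volume lower bound from $p$ via Bishop-Gromov (legitimate because $|\operatorname{Rm}|_{g(0)} \leq 1$ yields a two-sided Ricci bound on $B_{g(0)}(p,1)$) to obtain $\operatorname{Vol} B_{g(0)}(q, 1/4) \geq c(v_0) > 0$; Cheeger's injectivity radius estimate then gives $\operatorname{inj}_{g(0)}(q) \geq \iota_0(v_0) > 0$; and on a geodesic ball of radius $s < \iota_0$, Rauch comparison shows the pullback in exponential coordinates is a $C^0$-perturbation of the Euclidean metric with error $O(s^2)$, driving the isoperimetric constant within $\delta$ of $c_3$ once $s_0$ is small enough.

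With the claim, apply Theorem \ref{pseudolocalityyilai0} centred at each such $q$ at scale $s_0$ (the scalar curvature hypothesis is automatic from $|\operatorname{Rm}| \leq 1 \leq s_0^{-2}$) to obtain $B_{g(t)}(q, \epsilon_1 s_0) \subset\subset N$ and
\[
|\operatorname{Rm}|(x, t) < \alpha t^{-1} + (\epsilon_1 s_0)^{-2} \qquad \text{on } B_{g(t)}(q, \epsilon_1 s_0),\quad t \in [0, (\epsilon_1 s_0)^2].
\]
To extract the uniform bound $|\operatorname{Rm}| < \epsilon^{-2}$ on $B_{g(t)}(p, \epsilon)$ over the full time interval asserted by the theorem, choose $\epsilon$ small enough that distance distortion under bounded Ricci curvature keeps $B_{g(t)}(p, \epsilon)$ covered by balls of the form $B_{g(t)}(q, \epsilon_1 s_0 / 2)$, and that $(\epsilon_1 s_0)^{-2} < \epsilon^{-2}/2$. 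For $t \geq C\alpha$ the right-hand side above is then $< \epsilon^{-2}$ once $\alpha$ is taken small enough; for $t \in [0, C\alpha]$ a localised Shi-type short-time estimate propagating $|\operatorname{Rm}|_{g(0)} \leq 1$ (valid in the smooth part of the singular spacetime) gives $|\operatorname{Rm}|(x,t) \leq 2 < \epsilon^{-2}$. A continuation argument then extends the bound beyond the initial window $[0,(\epsilon_1 s_0)^2]$ by iteratively reapplying Theorem \ref{pseudolocalityyilai0} on successively later time slices, using at each step the curvature bound just established together with Bishop-Gromov to verify the hypotheses of a fresh application.

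The principal obstacle is the isoperimetric claim, where the bookkeeping must enforce the dependency order $v_0 \to \alpha \to \delta \to s_0 \to \epsilon$ and keep $s_0$ uniform across $q \in B_{g(0)}(p, 1/2)$; this follows from the uniformity of Bishop-Gromov volume propagation and Cheeger's lemma, but requires care. A secondary challenge is maintaining compact containment of all relevant balls in $N$ throughout the argument, standard once one invokes the Shrinking Balls Lemma together with the Ricci lower bound furnished by the very curvature estimate being bootstrapped.
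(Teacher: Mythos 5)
Your overall strategy is in the right family: scale to $r_0=1$, use the curvature and volume hypotheses to get injectivity radius control (Cheeger/CGT) and hence almost-Euclidean geometry at some definite small scale, verify the isoperimetric hypothesis of Theorem~\ref{pseudolocalityyilai0}, apply that theorem, and then upgrade the resulting $\alpha t^{-1} + (\epsilon r_0)^{-2}$ bound to a uniform $(\epsilon r_0)^{-2}$ bound. The paper does all of the first three steps essentially as you do (the paper uses harmonic coordinates via \cite{CGT}, you use Rauch comparison and exponential coordinates; these are interchangeable). But the paper does not apply the first pseudolocality theorem at a whole family of points $q$ in a half-ball --- it applies it once, at $p$ itself with radius $r$ depending on $v_0$ and $\delta'$, which is all that is needed since the desired conclusion is a bound on a ball about $p$.

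The material divergence, and the place where your proposal has a real gap, is the upgrade from the $\alpha t^{-1}$ estimate to the uniform bound for small $t$. You propose a localised Shi-type estimate to handle $t \in [0, C\alpha]$ and then a continuation argument reapplying Theorem~\ref{pseudolocalityyilai0} on later time slices, re-verifying the isoperimetric condition via Bishop--Gromov. In the incomplete setting this is circular: a local Shi estimate on $[0,C\alpha]$ needs a curvature bound on a strictly larger parabolic neighbourhood over that whole time interval, which is precisely what you are trying to establish; and re-verifying the isoperimetric hypothesis at a later time slice requires injectivity radius control at that time, which again presupposes curvature bounds of the sort being proven. The paper avoids this entirely by fixing $\alpha=1$ once and for all, observing that on the time interval $[0,(\epsilon' r)^2]$ one has $t^{-1}+(\epsilon' r)^{-2} \leq 2t^{-1}$, and then invoking Chen's curvature improvement result \cite[Theorem~3.1]{blC} (a local parabolic estimate that turns a $2/t$ bound plus an initial curvature bound into a uniform bound on a smaller parabolic cylinder). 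That single citation replaces your Shi estimate, your smallness of $\alpha$, and your continuation argument, and it is precisely the tool that makes the argument work without the bootstrapping issue. To repair your proposal you should drop the multi-point application, drop the continuation step, and substitute Chen's local curvature estimate for the Shi plus continuation construction.
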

	
	\begin{proof}[Proof of Theorem \ref{pseudolocalityyilai}]
		By scaling, we may assume $r_0 =1$.  By the results in \cite{CGT}, conditions (1) and (2) imply a uniform lower bound on the injectivity radius at $p$ depending only on $v_0$.  From this and the bound on curvature (actually we only need a lower bound on Ricci curvature), we may find harmonic coordinates $\mathbf{x}$ around $p$ in which we have $c(\|\mathbf{x}\|)^{-1}\delta_{ij} \leq g_{ij}(\mathbf{x}) \leq c(\|\mathbf{x}\|) \delta_{ij}$ for all $\|\mathbf{x} \| \leq d$ for some $d>0$, where the function $c(\rho)$ depends only on $v_0$, and $c\to 1$ as $\rho \to 0$.   In particular, if $\delta', \epsilon'$ correspond to $\alpha=1$ in Theorem \ref{pseudolocalityyilai0}, then conditions (1) and (2) in that theorem will hold on $B_{g(0)}(p, r)$ for some $r$ depending on $v_0$ and $\delta'$, and we conclude that $B_{g(t)}(p, \epsilon' r)\subset \subset N$  and $$|\operatorname{Rm}|(x, t) \leq t^{-1} + \epsilon'^{-2}$$ holds on $B_{g(t)}(p, \epsilon' r)$ for all $t\in [0, \min(T, \epsilon'^2 r)]$.  In particular, since $r\leq 1$ we have $|\operatorname{Rm}|(x, t) \leq 2t^{-1}$  on $B_{g(t)}(p, \epsilon' r)$ for all $t\in [0, \min(\epsilon'^2 r, T)]$ and it follows from {\cite[Theorem 3.1]{blC}} that we may have 
		$$|\operatorname{Rm}|(x) < C (\epsilon' r)^{-2}$$ 
		for a universal constant $C$ (depending only on dimension), and on $B_{g(t)}(p, \frac{\epsilon' r}{2})$ for all $t\in [0, \min(\frac{\epsilon' r}{2}, T)]$.  This completes the proof of the Theorem \ref{pseudolocalityyilai}.

	\end{proof}
		
Theorem \ref{asdasd} now follows, as described above,  by combining Theorem \ref{pseudolocalityyilai} with the proof of \cite[Theorem 1.1]{ST}.
	
\end{proof}

 Finally, we will make use of the following result which holds for any  solution (possibly incomplete) to \eqref{RF} in all dimensions.

\begin{proposition}\label{SBL}[Shrinking Ball Corollary 3.3 in \cite {ST}]
	There exists a dimensional constant $\beta = \beta(n) \geq 1$ such that the following holds.  
	
	Suppose $(M^n\times[0, T], g(t))$ is a (possibly incomplete) Ricci flow on an n-dimensional manifold $M^n$ such that $B_{g(0)}(x_0, r) \subset\subset M^n$ for some $x_0\in M^n$ and $r > 0$, and $Ric(g(t)) \leq (n-1)c_0/t$ on $B_{g(0)}(x_0, r) \cap B_{g(t)}(x_0, r-\beta\sqrt{c_0t})$ for each $t \in (0, T]$ and some $c_0 > 0$. Then $B_{g(0)}(x_0, r) \supset B_{g(t)}(x_0, r-\beta\sqrt{c_0t})$ for all $t \in [0, T]$.
\end{proposition}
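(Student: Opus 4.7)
The plan is to establish the inclusion by proving the pointwise distance distortion bound
\[
d_{g(t)}(x_0, y) \geq d_{g(0)}(x_0, y) - \beta \sqrt{c_0 t}
\]
for every $y$ in the interior of $B_{g(0)}(x_0, r)$. Once this holds, any $y$ with $d_{g(t)}(x_0, y) < r - \beta\sqrt{c_0 t}$ automatically satisfies $d_{g(0)}(x_0, y) < r$, which is exactly the required inclusion.

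The main tool will be the Hamilton--Perelman distance distortion lemma (Lemma 8.3 in \cite{P}): if at time $t_0$ a minimizing $g(t_0)$-geodesic from $x$ to $y$ exists and $\operatorname{Ric}(g(t_0)) \leq (n-1)K$ holds on $B_{g(t_0)}(x, r_0) \cup B_{g(t_0)}(y, r_0)$, then
\[
\frac{d^+}{dt}\bigg|_{t=t_0} d_{g(t)}(x, y) \geq -2(n-1)\bigl(\tfrac{2}{3} K r_0 + r_0^{-1}\bigr).
\]
Applied with $K = c_0/t_0$ and $r_0 = \sqrt{t_0/c_0}$, the right-hand side is $-C(n)\sqrt{c_0/t_0}$, which is integrable near $t_0 = 0$ and integrates to exactly the desired shift $\beta\sqrt{c_0 t}$, with $\beta = 2C(n)$ (possibly enlarged for the bookkeeping below).

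To convert this infinitesimal inequality into the required global statement under only the localized Ricci hypothesis, I would run a bootstrap in time. Fix $y$ with $d_{g(0)}(x_0,y) < r$ and let $T^*(y)$ be the supremum of times $t \in [0, T]$ for which both (a) $B_{g(0)}(x_0, r) \supset B_{g(s)}(x_0, r - \beta\sqrt{c_0 s})$ and (b) $d_{g(s)}(x_0, y) \geq d_{g(0)}(x_0, y) - \beta\sqrt{c_0 s}$ hold for every $s \in [0, t]$. The defining set is nonempty (trivially at $s = 0$) and closed by continuity of $s \mapsto d_{g(s)}$, so it suffices to show it is open in $[0, T]$. At any $t_0 < T^*(y)$, condition (a) guarantees that a minimizing $g(t_0)$-geodesic from $x_0$ to $y$ lies inside $B_{g(t_0)}(x_0, r - \beta\sqrt{c_0 t_0})$, where the assumed bound $\operatorname{Ric}(g(t_0)) \leq (n-1)c_0/t_0$ applies. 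Invoking the distortion lemma yields $\frac{d^+}{dt}\bigl|_{t_0}[d_{g(t)}(x_0, y) + \beta\sqrt{c_0 t}] \geq 0$, and a standard one-sided continuation argument propagates the inequality past $t_0$, closing the bootstrap.

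The hard part is the geometric bookkeeping in the open-set step: the auxiliary balls $B_{g(t_0)}(x_0, r_0)$ and $B_{g(t_0)}(y, r_0)$ required by the distortion lemma must both sit inside the region where the Ricci bound is known. The ball around $x_0$ is unproblematic, but the ball around $y$ only lies inside $B_{g(t_0)}(x_0, r - \beta\sqrt{c_0 t_0})$ when $d_{g(t_0)}(x_0, y)$ is sufficiently far from the outer radius. Since $r_0 \sim \sqrt{t_0/c_0}$ scales in the same way as the shrinking buffer $\beta\sqrt{c_0 t_0}$, a dimensional enlargement of $\beta$ absorbs the discrepancy uniformly, and endpoint cases ($y$ on the boundary of the $g(t_0)$-ball) are recovered by taking limits of interior $y$. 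After this calibration is fixed, the remainder of the proof is routine differential-inequality manipulation.
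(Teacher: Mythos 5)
The paper does not prove this proposition; it is quoted directly from Simon--Topping, and no proof appears in the source. So there is no in-paper argument to compare against, only the cited result. That said, your sketch is indeed the argument used in \cite{ST}: integrate Perelman's Lemma 8.3(b) distortion estimate with the time-dependent choice $K = c_0/t_0$, $r_0 \sim \sqrt{t_0/c_0}$, and run a continuity (bootstrap) argument to keep the minimizing geodesics and the auxiliary balls inside the region where the localized Ricci bound is actually assumed. Lemma 3.2 of \cite{ST} proves the version with the Ricci bound assumed on all of $B_{g(0)}(x_0,r)$, and Corollary 3.3 is extracted by observing, exactly as you do, that the continuity argument never leaves the intersection $B_{g(0)}(x_0,r)\cap B_{g(t)}(x_0,r-\beta\sqrt{c_0t})$.

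Two parts of your write-up are not quite right as stated, though both are fixable. First, the reduction in the opening paragraph is not ``automatic'': if the distance lower bound is proved only for $y$ with $d_{g(0)}(x_0,y)<r$, then a point $z$ with $d_{g(t)}(x_0,z)<r-\beta\sqrt{c_0t}$ need not a priori lie in $B_{g(0)}(x_0,r)$, so the bound does not directly apply to it. One must argue along the minimizing $g(t)$-geodesic from $x_0$ to $z$: if $z$ escaped $B_{g(0)}(x_0,r)$, there is a first point $w$ on that geodesic with $d_{g(0)}(x_0,w)=r$, and applying your estimate to interior points approaching $w$ gives $d_{g(t)}(x_0,w)\ge r-\beta\sqrt{c_0t}$, contradicting that $w$ is strictly between $x_0$ and $z$ on a $g(t)$-minimizer. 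You gesture at this with ``taking limits of interior $y$,'' but as written the logic is circular. Second, the claim that ``a dimensional enlargement of $\beta$ absorbs the discrepancy uniformly'' overstates the ease of the bookkeeping: the optimal Perelman radius $r_0\sim\sqrt{t_0/c_0}$ and the buffer $\beta\sqrt{c_0t_0}$ scale the same way in $t_0$ but differ by a factor of $c_0$, so for small $c_0$ the auxiliary ball around $y$ can be large compared to the buffer. One must also verify Perelman's side condition $d_{g(t_0)}(x_0,y)\ge 2r_0$. Neither issue is fatal, but they are exactly the parts you identify as ``the hard part'' and defer; the estimate really does need to be run carefully there rather than waved through.
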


\section{proof of Theorem \ref{t1}}
The proof is based on Proposition \ref{SBL} and the corollary to  Theorem \ref{asdasd} below.  Let us use $\widetilde{T}(v)$ and $c(v)$ to denote the positive functions $\widetilde{T}(v, -1, 1)$ and $c_0(v, -1, 1)$ for $0\leq v \leq V(3, -1/2)$ from Theorem \ref{asdasd}, where $V(3, -1/2)$ denotes the volume of the unit ball in the 3-dimensional space form of constant curvature $-1/2$.  Note that we may increase the given function $c_0(v, -1, 1)$ and decrease the function $\widetilde{T}(v, -1, 1)$ as we like without changing the statement of Theorem \ref{asdasd}, and so we may assume that $c(v)$ is strictly decreasing in $v$ and that 
\begin{equation}\label{tvsc} c(v)=1/\widetilde{T}(v)  \end{equation}
for all $v>0$.  We also note that by the example of the solution to Ricci flow emanating from arbitrarily sharp cones (see {\cite[Section 4 of Chapter 5]{CK}} for more detail), it must be that 
\[
\lim_{v\to 0} c(v)=\infty .
\]

\begin{corollary}\label{c1}[Corollary to  Theorem \ref{asdasd}]  
	
Let $\{N \times [0, T), g(t)\}$ for $T\leq 1$ be a smooth (possibly incomplete) solution to \eqref{RF} embedded within some 3D singular Ricci flow $\mathcal{M}$.  Suppose for some $x_0\in N$ and $v>0$ we have
	$ B_{g(0)}(x_0, 2\sqrt{c(v)}) \subset \subset N$ and
	\begin{equation}\label{c1e1}
		\frac{\operatorname{Vol} B_{g(0)}(x_0, \sqrt{c(v)} )}{(\sqrt{c(v)})^3} \geq v,
	\end{equation}
and 
\begin{equation}\label{c1e2}
\operatorname{Ric}{(g(0))} \geq  -1/c(v) \,\, \text{on}\,\, B_{g(0)}(x_0, 2\sqrt{c(v)}).
	\end{equation}
	Then for all $t\in (0,T)$  we have $ B_{g(t)}(x_0, \sqrt{c(v)}) \subset \subset N$ and 
\[
|\operatorname{Rm}|_{g(t)} \leq \frac{c(v)}{t} \text{ on } B_{g(t)}(x_0, \sqrt{c(v)}).
\]
\end{corollary}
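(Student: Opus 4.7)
The plan is to reduce to Theorem \ref{asdasd} via a parabolic rescaling that normalizes the hypotheses to the form $v_0 = v$, $K = 1$, $\sigma = 1$. First I would set
\[
\tilde{g}(s) := \frac{1}{c(v)}\, g\bigl(c(v)\, s\bigr), \qquad s \in \bigl[0,\, T/c(v)\bigr),
\]
which is again a smooth Ricci flow on $N$, embedded in the rescaled singular Ricci flow obtained from $\mathcal{M}$ by the same parabolic rescaling.

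Next I would verify the three hypotheses of Theorem \ref{asdasd} for $\tilde{g}$ at $x_0$ with $v_0 = v$, $K = 1$, $\sigma = 1$. Distances rescale by a factor of $1/\sqrt{c(v)}$, so $B_{g(0)}(x_0, 2\sqrt{c(v)}) = B_{\tilde{g}(0)}(x_0, 2)$, giving the compact inclusion $B_{\tilde{g}(0)}(x_0, 1+\sigma) \subset\subset N$. Volumes rescale by $c(v)^{-3/2}$, so the hypothesis \eqref{c1e1} yields
\[
\operatorname{Vol}_{\tilde{g}(0)} B_{\tilde{g}(0)}(x_0, 1) = c(v)^{-3/2} \operatorname{Vol}_{g(0)} B_{g(0)}\bigl(x_0, \sqrt{c(v)}\bigr) \geq v.
\]
The Ricci tensor as a $(0,2)$-tensor is invariant under constant rescaling of the metric, and since $g(0) = c(v)\,\tilde{g}(0)$, the hypothesis \eqref{c1e2} becomes $\operatorname{Ric}(\tilde{g}(0)) \geq -\tilde{g}(0)$ on $B_{\tilde{g}(0)}(x_0, 2)$, i.e.\ $K=1$ works.

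Then I would apply Theorem \ref{asdasd} to $\tilde{g}$: for every $s \in [0, T/c(v)) \cap (0, \widetilde{T}(v))$, we obtain $B_{\tilde{g}(s)}(x_0, 1) \subset\subset N$ together with $|\operatorname{Rm}|_{\tilde{g}(s)} \leq c(v)/s$ on this ball. Writing $t = c(v)s$ and undoing the rescaling, the ball becomes $B_{g(t)}(x_0, \sqrt{c(v)})$, and since $|\operatorname{Rm}|$ rescales as $|\operatorname{Rm}|_{\tilde{g}} = c(v)|\operatorname{Rm}|_g$, the bound becomes $|\operatorname{Rm}|_{g(t)} \leq 1/s = c(v)/t$, as required.

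Finally, the normalization \eqref{tvsc} gives $c(v)\widetilde{T}(v) = 1$, so after rescaling back the conclusions hold for all $t \in (0, c(v)\widetilde{T}(v)) = (0, 1)$, which covers all of $(0, T)$ since $T \leq 1$. I do not expect any substantive obstacle here: the corollary is essentially a bookkeeping consequence of parabolic rescaling, and the only subtle point is that the relation $c(v) = 1/\widetilde{T}(v)$ in \eqref{tvsc} is engineered precisely so that, under the normalization $T \leq 1$, the rescaled short-time existence interval $(0,\widetilde{T}(v))$ exhausts the full range $(0,T)$ after undoing the rescaling.
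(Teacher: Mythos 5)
Your proposal is correct and follows essentially the same argument as the paper: both set $\lambda = 1/c(v)$, parabolically rescale to $g_\lambda(s) = \lambda\, g(s/\lambda)$, verify the hypotheses of Theorem \ref{asdasd} with $v_0 = v$, $K = 1$, $\sigma = 1$, apply conclusion (3), and rescale back, using $c(v)\widetilde{T}(v)=1$ from \eqref{tvsc} together with $T\leq 1$ to see that the full interval $(0,T)$ is covered.
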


\begin{proof}
		Let $g(t)\,\, \text{on}\,\, N\times[0, T)$ be as in the Theorem.  Write $\lambda=1/c(v)$ and consider the rescaled solution to \eqref{RF} given by 
	\begin{equation}
	g_{\lambda}(s):=\lambda g(s/\lambda)\,\, \text{on}\,\, N\times[0,\lambda T).
	\end{equation}
	 Then we have 
	\begin{equation}
		\operatorname{Ric}{(g_{\lambda}(0))} \geq  -1 \,\, \text{on}\,\, B_{g_{\lambda}(0)}(x_0, 2),
	\end{equation}
	
	and
	
	\begin{equation}
		\begin{split}
		\operatorname{Vol}_{g_{\lambda}(0)} B_{g_{\lambda}(0)}(x_0, 1)&=\frac{\operatorname{Vol}_{g_0} B_{g_{0}}(x_0, 1/\sqrt{\lambda})}{(1/\sqrt{\lambda})^3}\geq v\\
		\end{split}
	\end{equation}
where we have used the definition of $g_{\lambda}$  and \eqref{c1e1}.  Thus by conclusion (3) in Theorem \ref{asdasd} and \eqref{tvsc} we have 
\[
|\operatorname{Rm}|_{g_\lambda (s)} \leq \frac{c(v)}{s} \,\,\text{on}\,\, B_{g_\lambda (s)}(x_0, 1) \,\, \text{for} \,\, s\in [0, \lambda T =\tilde{T}(v) T ).
\]
This in turn gives 
\begin{equation}
|\operatorname{Rm}|_{g (t)}=\lambda |\operatorname{Rm}|_{g_{\lambda} (\lambda t)} \leq \lambda \frac{c(v)}{\lambda t}=\frac{c(v)}{ t} \,\,\text{on}\,\, B_{g(t)}(x_0, \sqrt{c(v)})\end{equation}
for $t\in (0, T)$.  This concludes the proof of the Corollary.
\end{proof}

We now finish the proof of Theorem \ref{t1}. Let $(M^3, g_0)$ be a complete 3-manifold with $\operatorname{Ric}(g_0)\geq 0$ that satisfies the volume decay assumption \eqref{t1c1} for the function 
\[
f(r):= 2c^{-1}(r^2),
\]
where $c^{-1}$ is the inverse of the function $c(v)$ discussed above. By the properties of $c(v)$, the function $f(r)$ is positive and defined on $[L, \infty)$ for some $L> 0$ and satisfies $\lim_{r\to \infty} f(r)=0$.  \\

 Assume first that $(M^3, g_0)$ is orientable.  Then we may find an exhaustion of $M^3$ by relatively compact connected sets $V_k$ with smooth boundaries, and a sequence of smooth compact Riemannian manifolds without boundaries $(N_k, h_k)$ and diffeomorphisms onto their images $\phi_k:V_k\to N_k$ converging to $(M^3, g_0)$ as in the hypothesis of Theorem \ref{tyl}.  We conclude by Theorem \ref{tyl} the existence of corresponding local solutions $(\phi_{k_l}(V_l)\times[0, T), h_{k_l}(t))$ converging to a possibly incomplete solution $(M^3\times[0, T), g(t))$ to \eqref{RF} with $g(0)=g_0$ and $\operatorname{Ric}(g(t))\geq 0$ for all $t\in [0,T)$.  We may also apply Corollary \ref{c1} to each local solution $h_{k_l}(t)$ by Theorem \ref{tyl} (1).  
 
 For simplicity, we will denote the 
sequence  $(\phi_{k_l}(V_l)\times[0, T), h_{k_l}(t))$ by $(W_l\times[0, T), H_l(t))$.  Let $T'=\min (T, 1, \beta^{-1})$ where $\beta=\beta(3)>0$ is the constant from Proposition \ref{SBL}.
The assumed volume bound \eqref{t1c1} and the definition of $f$ ensure that $(M^3, g_0)$ satisfies: 
\[
\frac{\operatorname{Vol} B_{g(0)}(x_0, \sqrt{c(v)} )}{(\sqrt{c(v)})^3} \geq 2v
\]
for all sufficiently small $v>0$. The local smooth convergence of the $(W_l\times[0, T'), H_l(t))$'s to $(M^3\times[0, T'), g(t))$ implies: for each $v>0$ there exists $m_v$ such that for each $l\geq m_v$, the incomplete solution  $(W_l\times[0, T'), H_l(t))$ satisfies the hypothesis of Corollary \ref{c1} with $x_0=p_{l}:=\phi_{k_l}(p)$ and hence
\[
|\operatorname{Rm}|_{H_l(t)} \leq \frac{c(v)}{t} \text{ on } B_{H_l(t)}(p_{l}, \sqrt{c(v)}) \,\, \text{ for all } t \in [0, T').
\]
 Thus by Proposition \ref{SBL} we conclude that for each $v>0$ sufficiently small  we have  
\[
B_{H_l(t)}(p_{l}, \sqrt{c(v)}-\beta \sqrt{c(v)t} )\subset B_{H_l(0)}(p_{l}, \sqrt{c(v)})
\]
 in $W_l$ for all $l\geq m_v$ and $t\in [0, T')$. Thus in the limit we have
$$B_{g(t)}(p, \sqrt{c(v)}-\beta \sqrt{c(v)t} )\subset B_{g(0)}(p, \sqrt{c(v)})$$ in $M^3$ for all for all $v>0$ sufficiently small and $t\in [0, T')$.  It follows by the completeness of $g_0$ and the fact that $\lim_{v\to 0}c(v)=\infty$, that $(M^3, g(t))$ is complete for all $t< T'$.   This concludes the proof of Theorem \ref{t1} assuming $(M^3, g_0)$ is orientable.  If it is not orientable, we repeat the argument to obtain a solution $\tilde{g}(t)$ to \eqref{RF} on the Riemannian double cover $(\tilde{M}, \tilde{g}_0)$, then by Theorem \ref{tyl} (3) we can push this down to a solution $g(t)$ on $M^3$ having the desired properties.  

 This completes the proof of Theorem \ref{t1}.

\section{proof of Theorem \ref{t2}}

By Liu's classification \cite{gL}, either $M^3=\mathbb{R}^3$ or else the Riemannian universal cover of $(M^3, g_0)$ is a Riemannian product.  By the results for Ricci flow on surfaces by Topping \cite{TT} and Giesen, Topping \cite{GT}, we may thus assume that $M^3=\mathbb{R}^3$.

Let $(M^n, g)$ be a complete Riemannian manifold with nonnegative complex sectional curvature of arbitrary dimension $n$.  It was proved in \cite{CRW} that when the soul is a single point, so that $M^n=\mathbb{R}^n$, there exists an exhausting sequence of relatively compact connected open sets $\{V_k\subset M^n\}$, a double sequence of compact Riemannian manifolds without boundaries $\{(N_k, h_{k, l})\}$  and diffeomorphisms onto their images $$\phi_k: V_k \to N_k;\,\,\,\,\,\,\,\psi_k: N_k \to N_k $$  all together satisfying the following:
	\begin{enumerate}
	\item Each $(N_k, h_{k, l})$ has strictly positive complex sectional curvature, volume uniformly bounded from below, and diameter bounded above depending on $k$ but not on $l$.

	\item Each $\psi_k$ is an isometry relative to every $h_{k, l}$ and satisfies $$\psi_k^2 =\operatorname{Id}\neq \psi_k;\,\,\, \,\,\,\,\,\,\, \psi_k (q)=q \,\,\text{iff}\,\, q\in  \partial (\phi_k V_k )$$ 
	
	\item  For all $q\in V_k$ we have $$dist_g (q, \partial V_k) \geq dist_{\phi^*(h_{kl})} (q, \partial V_k)) - C$$ for some $C$ independent of $k, l$. 
	
	\item Given any compact set $S\subset \subset M^n$, there exists $k_0$ such that for every $k > k_0$ we have the smooth convergence
		\[
		\phi_k^*(h_{k, l})\xrightarrow[l\to \infty]{} g \,\,\, \text{on} \,\, S	
		\]
 where the convergence is uniform over $k$.
\end{enumerate}	
It was then proved that the corresponding Ricci flows $h_{k, l}(t)$ exist on $N_k$ up to a uniform time $T>0$ independent of $k$ and $l$, and that a diagonal subsequence of $\phi_k^* (h_{k, l} (t))$ converges smoothly uniformly on compact subsets on $M^n\times[0, T)$ to a solution $g(t)$ to Ricci flow which has nonnegative complex sectional curvature and is complete for all $t\in[0, T)$.

 Now assume that $n=3$ above, in which case $g$ will in fact have nonnegative sectional curvature on $M^3=\mathbb{R}^3$.  Let $\tilde{g}$ be a compactly supported symmetric 2-tensor on $M^3$ such that $g_0:=g+\tilde{g}$ is a complete Riemannian metric with nonnegative Ricci curvature.  In other words, there is a compact set $K\subset \subset M^3$ for which 
\begin{equation}\label{ee1}
\operatorname{Ric}(g+\tilde{g})\geq 0 \text{ on }M^3, \text{ and } \tilde g =0  \text{ on } M^3\setminus K.
\end{equation}

 From now on, consider $k$ sufficiently large so that $K \subset \subset V_k$.   Define the smooth metrics $\tilde{h}_{k, l}$ on $N_k$ as

 \begin{equation}
 	\left\{
 	\begin{aligned}
 		\tilde{h}_{k, l} &:=h_{k, l} +(\phi_k^{-1})^*\tilde{g};\,\,\, \text{on}\,\,\, \phi_k(V_k) \\
 		\tilde{h}_{k, l} &:= \psi_k^* (h_{k, l} +(\phi_k^{-1})^*\tilde{g}) \,\,\, \text{on}\,\,\, \psi_k (\phi_k V_k).
 	\end{aligned}
 	\right.
 \end{equation}

  
    Though the $\tilde h_{k,l}$ may not be positive definite a priori, we may assume that they are by taking $k,l$ sufficiently large and using property (4) as well as the fact that
\[
\inf\{\|v\|_{g+\tilde g} \; : \; v\in T_p M^3 \text{ where } p\in K \text{ and } \|v\|_{g}=1 \}>0, 
\]
where the positivity is to due the compactness of $K$.

 Then by property (1) above, each $(N_k, \tilde{h}_{k, l} )$ will satisfy:
\begin{equation}\label{eee1}\text{Vol}_{k, l} \geq v; \,\,\, \text{Diam}_{k, l} \leq C_k;\,\,\, \text{Ric}(\tilde{h}_{k, l}) \geq -c_{l}\end{equation}
for positive constants $v, C_k, c_l$ depending only on their subscripts (if any) and where $c_l \to 0$ as $l\to \infty$. 
It follows from Corollary 3 in \cite{BCRW} that for each $l$ sufficiently large depending on $k$, there exists a nonnegative Ricci curvature metric on $N_k$, and that these can be taken to converge on $N_k$ as $l\to \infty$ as we now describe.  In particular, the proof there showed that for each fixed $k$ and all sufficiently large $l$ the Ricci flow $\tilde{h}_{k, l}(t)$ starting from $\tilde{h}_{k, l}$ on $N_k$ exists up to a uniform time $T_k>0$ depending only on $k$ and  (by Theorem 2 in \cite{BCRW}) satisfies the curvature bound 

\begin{equation}\label{eeee1} 
	|\operatorname{Rm}_{k, l}(t)| \leq C'/t \end{equation}
 for some $C' >0$ independent of $k, l$.  Moreover, it was shown that the solutions $\{(N_k\times[0, T_k), \tilde{h}_{k, l}(t))\}_{l\in \mathbb{N}}$ subconverges, as in Hamilton's Compactness Theorem \cite{rH4}, to a limit solution $(N_k\times(0, T_k), \tilde{h}_{k, \infty}(t))$ having everywhere nonnegative Ricci curvature.  The nonnegativity of Ricci curvature can also be seen for example, from the bounds \eqref{eee1}, \eqref{eeee1} and {\cite[Lemma 2.2]{ST}}) which in particular imply a uniform lower bound on \begin{equation}\label{PP11}Rc(h_{kl}(t))\geq -100 c_l C' \end{equation} 
 for $t\in [0, T_k)$ provided $T_k$ is sufficiently small depending only on $C'$.
 
 Now the estimate \eqref{eeee1} and {\cite[Theorem 3.1]{blC}} imply that $ \tilde{h}_{k, \infty}(t)$ converges smoothly as $t\to 0$ on a given compact set $S\subset \subset N_k$ provided $\tilde{h}_{k, l}(0)$ likewise converges as $l\to \infty$.   On the other hand, for a given compact set and $k$ sufficiently large, the latter limit exists and equals $(\phi_k^{-1})^* (g + \tilde{g})$ by condition (4) above.  Moreover, by \eqref{PP11} and the fact $c_l \to 0$, we may still have condition (3) after replacing $h_{k, l}$ there with $\tilde{h}_{k, \infty}(t)$  where the constant $C$ there will be independent of $k, l$ and $t\leq \min(T_k, 1)$.
 
  In summary, we conclude the existence of sequences $l_k \to \infty$ and $t_k\to 0$ for which the metrics $H_{k}:= \tilde{h}_{k, l_k}(t_k)$ on $N_{k}$ satisfy the following relative to the same maps $\phi_k: V_k\subset M^3\to N_k$ and $\psi_k:N_k\to N_k$ defined above:
 
 \begin{enumerate}
 	\item [(a)] Each $(N_{k}, H_k)$ has nonnegative Ricci curvature.
 	\item [(b)] Each $\psi_{k}$ is an isometry relative to $H_k$ satisfying $$\psi_{k}^2 =\operatorname{Id}\neq \psi_k	\,\,\, \,\,\,\,\,\,\, \psi_k (q)=q \,\,\text{iff}\,\, q\in \partial \phi_k (V_k )$$ 
 	
 	\item  [(c)] For all $q\in V_k$ we have $$dist_g (q, \partial V_k) \geq dist_{\phi^*(H_k)} (q, \partial V_k)) - C$$ for some $C$ independent of $k$. 
 	\item [(d)] Given any compact set $S\subset \subset M^3$, we have the smooth convergence as $k\to \infty$
 	
 	\begin{equation}\phi_{k}^*(H_k)\xrightarrow[C^{\infty}_{loc}(M^3)]{}  (g + \tilde{g})
 		\,\,\, \text{on} \,\, S.	\end{equation}
 \end{enumerate}

 Now let $H_k(t)$ be the corresponding Ricci flow on $N_k$ with $H_k(0)=H_k$.  By Hamilton's convergence results for nonnegative Ricci curvature metrics in \cite{rH1}} we know that $H_k(t)$ is either stationary/Ricci flat or else exists up to some $0<T_k <\infty$ with $\text{Vol}_{H_k (t)} N_{k} \to 0$ as $t \to T_k$.  On the other hand, Perelman's pseudolocality \cite{P} combined with condition (c) above implies that for any given compact $S\subset \subset M^3$, there exists $T_S , V_S>0$ such that 
 $\text{Vol}_{H_k (t)} \phi_{k}(S) > V_S$ for all $t\leq \text{min}(T_S, T_k)$ and all $k$.  We conclude that $T_k >T>0$ for all $k$ and some $T>0$. 
 
 Thus from (a)-(c) and Theorem \ref{tyl} we obtain a nonnegative Ricci curvature (albeit possibly incomplete) solution $(M^3\times[0, T), g(t))$ to \eqref{RF} starting from $g(0)=g+\tilde{g}$ and after possibly shrinking $T>0$.  Moreover, from the proof of Theorem \ref{tyl} in \cite{yL}, we may actually conclude that  $(M^3\times[0, T), g(t))$ is a local limit of the solutions $\{N_{k}\times[0, T), H_k(t)\}$ to \eqref{RF} as in Theorem \ref{tyl} part (2).

 It remains to prove that $g(t)$ is complete for all $t>0$.  This in fact follows from the proof of completeness of the limit solution in \cite{CRW} as we now sketch.
 
The proof is based on the choice of the exhaustion $\{V_k\}_{k=1}^{\infty}$ of $M$ made in \cite{CRW}.   Specifically, $V_k$ was defined as the $k$ sublevel of the Busemann function based at some $p_0\in M$.   In particular, if $\mathcal{B}$ denotes the set of geodesic rays on $M$ starting from $p_0$ then  $$V_k:=\{q\in M: b(q)<k\}$$ where
 \begin{equation}\nonumber
 	b(q):= \sup_{\gamma \in \mathcal{B}}  \lim_{t\to \infty}\left( t- \text{dist}_g(\gamma(t), q) \right).
 \end{equation}
 
 In what follows, we fix some $T'<T$ and some $R>0$. We will use $C$ to denote a positive constant depending only on the solution $g(t)$ on $M\times[0, T']$ and which may differ from line to line.  For each $k$, denote $$p_k'=\phi_k (p_0) \in N_k;\,\,\,  V_k'=\phi_k (V_k) \subset N_k;\,\,\, L_k=\text{dist}_{H_k(0)}(p_k' ,  \partial V_k').$$
In particular, we have that $dist_{H_k(t)}(p_k', \psi_k (p_k'))=2L_k$ by property (b) of the map $\psi_k$.

By the local convergence of the $\phi_k^* (H_k(t)) \to g(t)$ on $M$, and the fact that $\psi_k$ is an isometry relative to $H_k$, there are neighborhoods $U, V$ around $p_k', \psi_k (p_k')$ (resp.) such that \begin{equation}\label{AM1}\sup_{(U\bigcup V) \times [0, T']}|Rc(H_k(t))|\leq C,\end{equation} for some $k$ sufficiently large.  From \eqref{AM1}, the argument in  \cite{CRW} using the Ricci flow \eqref{RF} and the second variation formula for arc length (see also {\cite[Theorem 17.4]{rH4}}), we have that 
\begin{equation}\label{CRWe0}
	dist_{H_k(t)}(p_k', \psi_k (p_k')) \geq 2 L_k -C.\end{equation} 
The LHS above equals $2dist_{H_k(t)}(p_k', \partial V_k)$.  Moreover, for $k$ sufficiently large we have $B_{H_k(t)}(p_0,R)\subset  V_k$.  Combining these with  \eqref{CRWe0}  gives \begin{equation}\label{CRWpe11}dist_{H_k(t)}(B_{H_k(t)}(p_k',R), \partial V_k') \geq L_k -C -R.\end{equation}   
for $k$ sufficiently large and all $t\in [0, T']$.  On the other hand, we have $H_k(0)\geq H_k(t)$ by \eqref{RF} and the fact $Rc(H_k(s)) \geq 0$ for all $s\in [0, t]$. Thus by property (c) above for the metric $H_k$,  we may replace $\text{dist}_{H_k(t)}$ in \eqref{CRWpe11} with $\text{dist}_{(\phi^{-1})^*g}$ provided we subtract a constant $C$ from the RHS.  Next, by the smooth local convergence of the $\phi_k^* (H_k(t))'s$ to $g(t)$ on $M$, we may further replace $B_{H_k(t)}(p_k',R)$ with  $B_{g(t)}(p_0,R)$ by futher subtracting a constant $C$ from the RHS.  Pulling the resulting inequality back to $V_k$ by $\phi^*$ gives

\begin{equation}\nonumber dist_{g}(B_{g(t)}(p_0,R), \partial V_k) \geq L_k -C -R,\end{equation}    
for $k$ sufficiently large and all $t\in [0, T']$. Now we note the following basic propery of the sublevel sets of the Busemann function: for any $s_1 < s_2$ we have
\begin{equation}\label{AM2}b^{-1}((-\infty, s_1] )= \{ q\in b^{-1}((-\infty, s_2]): \text{dist}_g(q, \partial b^{-1}((-\infty, s_2]) \geq s_2-s_1.\}\end{equation}
Combining these with the fact that $\partial V_k=\partial b^{-1}((-\infty, k])$ gives
\begin{equation}\nonumber
	B_{g(t)}(p_0,R) \subset b^{-1}((-\infty, k- (L_k -C-R)]) \subset b^{-1}((-\infty, C+R])
\end{equation}
for all $t\in [0, T']$ where for the last inclusion we have used that $b(p_0)=0$ and thus $L_k\geq k$ again by the above property of $b$.  

 In particular, we have shown that for all $R>0$ there is some compact set $K_R \subset \subset M$ such that $B_{g(t)}(p_0,R) \subset K_R$ for all $t\in [0, T']$ and it follows that $g(t)$ is complete on $M$ for each $t\in [0, T']$ and thus for each $t\in [0, T)$ as $T'<T$ was arbitrarily chosen.  This completes the proof of Theorem \ref{t2}.

\end{document}